\newcommand*{\affaddr}[1]{#1}
\newcommand*{\affmark}[1][*]{\textsuperscript{#1}}
\newtheorem{theorem}{Theorem}
\newtheorem{lemma}[theorem]{Lemma}
\newtheorem{definition}{Definition}
\numberwithin{equation}{section}
\begin{document}
\title[ Convergence of Random Jacobi Series in weighted $\mathrm{L}_{[-1,1]}^{\mathrm{p},(\zeta,\eta)}$ Space ]
{ On the Convergence of Random Fourier--Jacobi Series in weighted $\mathrm{L}_{[-1,1]}^{\mathrm{p},(\zeta,\eta)}$ Space }

\author[P. Maharana, S. Sahoo ]
{P\lowercase{artiswari} M\lowercase{aharana}\affmark[1] \lowercase{and}
S\lowercase{abita} S\lowercase{ahoo}\affmark[2]\\
\affaddr{\affmark[1] D\lowercase{epartment} \lowercase{of} M\lowercase{athematics}, S\lowercase{ambalpur} U\lowercase{niversity}, O\lowercase{disha}, I\lowercase{ndia}}\\
\affaddr{\affmark[2] D\lowercase{epartment} \lowercase{of} M\lowercase{athematics}, S\lowercase{ambalpur} U\lowercase{niversity}, O\lowercase{disha}, I\lowercase{ndia}.
}\\
{\affmark[1] \lowercase{partiswarimath1@suniv.ac.in} }\\
{\affmark[2] \lowercase{sabitamath@suniv.ac.in} } \\
 }
\begin{abstract}
In the present paper, the random series $\sum\limits_{m=0}^\infty c_m C_m(\varpi)q_m^{(\zeta,\eta)}(u)$ in orthogonal Jacobi polynomials
$q_m^{(\zeta,\eta)}(u)$ is discussed.
The scalars $c_m$ are Fourier--Jacobi coefficients of a function in the weighted space $\mathrm{L}_{[-1,1]}^\mathrm{p}(d\mu_{\zeta,\eta}),\mathrm{p}>1.$
The random variables $C_m(\varpi)$
are chosen to be the Fourier--Jacobi coefficients of symmetric stable process $Y_{\zeta,\eta}(v,\varpi)$ of index $\chi \in [1,2]$ for $\zeta,\eta \geq 0,$ which are not independent.
We prove that, under certain conditions on $\mathrm{p},\zeta$ and $\eta,$ 
the random Fourier--Jacobi series converges in probability
to the stochastic integral 
\begin{equation*}
\int_{-1}^1 \mathfrak{g}(u,v)dY_{\zeta,\eta}(v,\varpi).
\end{equation*}
We also establish the existence of this integral in the sense of probability
 for $\mathfrak{g} \in \mathrm{L}_{[-1,1]}^\mathrm{p}(d\mu_{\zeta,\eta}).$  

{\bf 2020 MSC Classification}-: 60G99, 40G15.

{\bf Key Words}: Convergence in probability; Jacobi polynomials; Random Fourier--Jacobi series; Stochastic integral; Symmetric stable process.

\end{abstract}
\maketitle{}
\section{\bf Introduction}
\setcounter{equation}{0}

The field of Fourier analysis has been significantly enriched by the introduction of Fourier series and it has extensive applications in the physical sciences and engineering. 
Studies on Fourier series in orthogonal polynomials have received attention due to the need for the expression of functions as a series in
various orthogonal functions.
Many branches of the physical sciences use these expression of functions as a series in orthogonal polynomials.
In the early part of the year 1800, the Fourier series was developed to solve the problem of heat diffusion in a continuous medium. 
 This concept was later on expanded to random Fourier series in 1930 by the work of Kahane, Marcus, and Pisier. The random Fourier series have many applications in various fields, such as electrical engineering, signal processing, optics, etc.
Random Fourier series play a vital role in electrical engineering, signal processing, optics, and other fields because white noise, which is inherently present in all these areas, is a random signal.
 It is also used for the generation of random noise \cite{GA}. 
Most of the random series that have been studied are formed of independent random coefficients.
Rademacher \cite{HR}, Steinhaus \cite{HS1,HS2}, Hunt \cite{GH}, Paley, and Zygmund \cite{PZ1.1,PZ1.2}, Paley et al. \cite{PWZ} discussed random series
 in the form
 \begin{equation}\label{00.1}
\sum_{n=-\infty}^\infty a_n X_n(\varpi) e^{int}.
\end{equation}
The random coefficients $X_n(\varpi)$ are independent, identically distributed random variables, such as 
Rademacher functions, Steinhaus functions, etc.
 Hide et al. \cite{HKPS} investigated  
 the series (\ref{00.1}) in the context of the study of ``White Noise."
The model for white noise was taken to be $\displaystyle{{dW}\over{dt}},$ where $W(t)$ is the Brownian motion. 
It is also known as Wiener process, 
which is a continuous stochastic process with independent increments such that
\begin{equation*}
E\Big(W(t)-W(s)\Big)=0\; \text{and} \; E\Big|W(t)-W(s)\Big|^2=|t-s|^2.  
\end{equation*}
Since $W(t)$ is nowhere differentiable, this model seems meaningless and can be assigned in a weak sense. 
It is observed that the stable process $\mathfrak{X}(v,\varpi)$ are better models for white noise than the Wiener process. 
Stochastic process are useful in modeling extremal events. Recently, it has been found that the
stable process $\mathfrak{X}(v,\varpi)$ is applied to the analysis of systems under heavy--tailed noise, such as in assessing wind power fluctuations \cite{KAO2019}.
Many researchers, such as Nayak et al. \cite{NPM}, Dash and Pattanayak \cite{DP}, Dash et al. \cite{DPP} have studied the random Fourier series associated with 
the stable process $\mathfrak{X}(v,\varpi).$

Further, the orthogonal polynomials, which originated in the 19th century, play an essential role in mathematical physics. Extensive studies have been conducted on the 
Fourier series in orthogonal polynomials. Marian and Marian \cite{MM} have studied power series involving orthogonal polynomials that occur in problems of quantum optics. 
The Jacobi polynomials hold great importance in the hierarchy of orthogonal polynomial classes and have found widespread use in all areas of science and engineering. Many researcher have discussed the convergence property of Fourier–Jacobi series.
However, research on random Fourier series involving orthogonal polynomials has not been extensively explored. The complexity involved in handling such series and the limited availability of literature on their application in physical problems are among the primary reasons for this lack of attention.

Liu and Liu \cite{LL, LL1} discussed the Fourier series in Hermite polynomials for functions $f$ belongs to 
$\mathrm{L}^2(\mathbb{R})$ in the form
\begin{equation*}
\mathfrak{g}(u):=\sum\limits_{m=1}^\infty d_m \varphi_m(u),
\end{equation*}
where
\begin{equation*}
d_m:=\int_{-\infty}^\infty \mathfrak{g}(u)\varphi_m(u) du
\end{equation*}
are the Fourier--Hermite coefficients of $\mathfrak{g}$ and $\varphi_m(u)$
 are Hermite Gaussian functions.
Since the eigenfunctions of Fourier transform defined on 
$\mathrm{L}^2(\mathbb{R})$ 
 are the Hermite Gaussian functions with corresponding eigenvalues
\begin{equation*}
\lambda_m:=\exp\Big[\frac{-i \pi}{2}\mod(m,4)\Big],
\end{equation*}
that leads to express the Fourier transform $\mathfrak{G}$ of $\mathfrak{g}$ as
\begin{equation*}
\sum_{m=1}^\infty d_m \lambda_m \varphi_m(u).
\end{equation*}
Further, the fractional Fourier transform of $\mathfrak{g}$
of rational order $\beta$ have eigenfunctions same to the Hermite Gaussian functions, 
but correspond to the eigenvalues $\lambda_m^\beta.$
This
leads 
to the expression of fractional Fourier transform 
$\mathfrak{G}^\beta$ of $\mathfrak{g}$
 in series form as
\begin{equation*}
\sum_{m=0}^\infty d_m \lambda_m^\beta \varphi_m(u).
\end{equation*}
Liu and Liu extended the rational order fractional Fourier transform of $\mathfrak{g}$
to irrational order and introduced the series
\begin{equation}\label{0.5}
\mathcal{R}[\mathfrak{g}(u)]:=\sum_{m=0}^\infty d_m \mathcal{R}(\lambda_m) \varphi_m(u),
\end{equation}
where $\mathcal{R}(\lambda_m):=\exp[i\pi \mathrm{Random(m)}]$ are 
random values from the unit circle in $\mathbb{C}.$
They called it the random Fourier transform, which is a random Fourier series in orthogonal Hermite polynomials.
They applied it in image encryption and decryption as well as in general image processing and signal processing.
This raises the question of what would happen if the random coefficients $\mathcal{R}(\lambda_n)$ selected from the 
unit circle in $\mathbb{C}$ are replaced by other random variables in the random series (\ref{0.5}). 
All these works and the application of Jacobi polynomials in mathematical physics \cite{J},
motivated us and provide a way to study random Fourier series involving orthogonal Jacobi polynomials. 

The random series considered here is in the form
\begin{equation}\label{0.3}
\sum_{m=0}^\infty c_m r_m(\varpi) q_m^{(\zeta,\eta)}(u),
\end{equation}
where $q_m^{(\zeta,\eta)}(u)$ is the orthogonal Jacobi polynomials  
with a distinct set of random variables $r_m(\varpi).$
Recently, we studied the random Jacobi series (\ref{0.3}) for specific classes of continuous functions \cite{MS}. In this article, 
we discuss the convergence of random series in weighted measurable space $\mathrm{L}_{[-1,1]}^\mathrm{p}(d\mu_{\zeta,\eta}).$
\begin{definition}
The space $\mathrm{L}_{[-1,1]}^\mathrm{p}(d\mu_{\zeta,\eta}),\mathrm{p} \geq 1$ is the set of all measurable functions $\mathfrak{g}$
in the segment $[-1,1]$ with
the weighted measure 
\begin{equation*}
d\mu_{\zeta,\eta}(u):=\varrho^{(\zeta,\eta)}(u)du,  
\end{equation*}
where
\begin{equation*}
\varrho^{(\zeta, \eta)}(u):=(1-u)^\zeta(1+u)^\eta,\zeta,\eta>-1,  
\end{equation*}
and satisfy 
\begin{equation*}
\int\limits_{-1}^1 |\mathfrak{g}(u)|^\mathfrak{p} d\mu_{\zeta,\eta}(u) < \infty.
\end{equation*}
\end{definition}
This space is equipped with the weighted norm
\begin{equation*}
||\mathfrak{g}||_{\mathrm{L}_{[-1,1]}^\mathrm{p}(d\mu_{\zeta, \eta})}=\Big\{\int_{-1}^1 |\mathfrak{g}(u)|^\mathrm{p} d\mu_{\zeta, \eta}(u)\Big\}^{\frac{1}{\mathrm{p}}},\; \text{for} \; \mathrm{p} \geq 1.
\end{equation*}
If $\zeta =\eta=0,$ then
it is simply the $\mathrm{L}_{[-1,1]}^\mathrm{p}$ space and
\begin{equation*}
||\mathfrak{g}||_{\mathrm{L}_{[-1,1]}^\mathrm{p}(d\mu_{0,0})}=||f||_{\mathrm{p}}=\Big\{\int_{-1}^1|\mathfrak{g}(u)|^\mathrm{p}du\Big\}^{1/\mathrm{p}}.
\end{equation*}
The scalars $c_m$ are considered to be the Fourier--Jacobi coefficients of a function $\mathfrak{g} \in \mathrm{L}_{[-1,1]}^\mathrm{p}(d\mu_{\zeta,\eta}),\mathrm{p}\geq1$ defined as
\begin{equation}\label{1.2}
c_m:=\int_{-1}^1 \mathfrak{g}(v)q_m^{(\zeta,\eta)}(v)d\mu_{\zeta,\eta}(v).
\end{equation}

The random variables $r_m(\varpi)$
are chosen to be the Fourier--Jacobi coefficients of a symmetric stable process. According to Lukacs \cite[p.~148]{L}, the stochastic integral
\begin{equation}\label{1.1.0}
\int\limits_a^b \mathfrak{g}(v)d\mathfrak{X}(v,\varpi)
\end{equation}
exists in probability if $\mathfrak{X}(v,\varpi),\;v\in \mathbb{R}$ 
is a continuous stochastic process and $\mathfrak{g}$
is a continuous function on $[a,b].$
The integral (\ref{1.1.0}) is known as a random variable.
Moreover, if $\mathfrak{X}(v,\varpi),\;v \in \mathbb{R}$ 
is a symmetric stable process of index $\chi \in [1,2]$
and $\mathfrak{g} \in \mathrm{L}_{[a,b]}^\mathrm{p},$ 
$\mathrm{p}\geq 1,$ 
 then the integral (\ref{1.1.0}) exists in probability 
for $\mathrm{p} \geq \chi$ 
\cite{NPM}.
Consider the Jacobi weight
$\varrho^{(\zeta, \eta)}(v):={(1-v)}^\zeta{(1+v)}^\eta,$ which is continuous in $[-1,1],$ for $\zeta, \eta \geq 0.$
Therefore, the integral $\int_{-1}^v \varrho^{(\zeta,\eta)}(s)d\mathfrak{X}(s,\varpi)$ exists and is a random variable.
Let 
\begin{equation*}
Y_{\zeta,\eta}(v,\varpi):=\int_{-1}^{v}\varrho^{(\zeta,\eta)}(s)d\mathfrak{X}(s,\varpi),
\end{equation*}
which is a stochastic process with $v \in [-1,1].$
 It has been seen that $Y_{\zeta,\eta}(v,\varpi)$ is a symmetric stable process of index $\chi \in [1,2]$ (see Lemma \ref{4L.1}).

It is proved that if $\mathfrak{g}$  belongs to $\mathrm{L}_{[-1,1]}^\mathrm{p}{(d\mu_{\zeta,\eta})},\zeta,\eta \geq 1,$
then the integral
\begin{equation}\label{0.2}
\int\limits_{-1}^1 \mathfrak{g}(v)dY_{\zeta,\eta}(v,\varpi)
\end{equation}
 exists in probability in the segment $[-1,1]$ for $\mathrm{p}\geq \chi$ (see Theorem \ref{4T.1}).
If $\mathfrak{g}(v):=q_m^{(\zeta,\eta)}(v)$ is the orthonormal Jacobi polynomials, then
\begin{equation}\label{2.2}
C_m(\varpi):=\int\limits_{-1}^1 q_m^{(\zeta,\eta)}(v) dY_{\zeta,\eta}(v,\varpi)
\end{equation}
exist in $[-1,1],$ which are random variables.
We call these $C_m(\varpi)$ as the Fourier--Jacobi coefficients of the symmetric
stable process $Y_{\zeta,\eta}(v,\varpi),$ which can be seen easily to be not independent.
The convergence of random series (\ref{0.3}) is established under some restrictions on the parameters $\zeta,\eta.$

This paper is structured as follows.
Some auxiliary results that will be used to prove the convergence of the random series (\ref{0.3}) is established in Section 2.
In Section 3, we prove 
the existence of integral (\ref{0.2}) and show that the random Fourier--Jacobi series (\ref{0.3}) converges to the integral (\ref{0.2}).
\section{\bf Auxiliary Results}
\begin{lemma}\label{4L.1}
 Let $\mathfrak{X}(v,\varpi)$ be a symmetric stable process and $\varrho^{(\zeta,\eta)}(v)$ be the Jacobi weight in $[-1.1].$ Then $Y_{\zeta,\eta}(v,\varpi)$ is a symmetric stable process of index $\chi \in [0,2],$ for $\zeta,\eta \geq 0$ and $v \in [-1,1].$
\end{lemma}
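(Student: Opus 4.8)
The plan is to identify $Y_{\zeta,\eta}(v,\varpi)$ as having symmetric $\chi$-stable finite-dimensional distributions by computing its characteristic function directly from the approximating Riemann--Stieltjes sums of the stochastic integral. Recall that a random variable $Z$ is symmetric $\chi$-stable precisely when $E[\exp(i\theta Z)] = \exp(-\lambda|\theta|^\chi)$ for some scale $\lambda \geq 0$ (the form is real and even in $\theta$, which encodes symmetry), and that the symmetric stable process $\mathfrak{X}(v,\varpi)$ has independent increments with $E[\exp(i\theta(\mathfrak{X}(t,\varpi)-\mathfrak{X}(s,\varpi)))] = \exp(-(t-s)|\theta|^\chi)$ after the usual normalisation. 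The goal is to show the analogous exponential form survives integration against the continuous Jacobi weight.

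First I would fix $v \in [a,b]$, take a partition $a = s_0 < s_1 < \cdots < s_n = v$ with tags $s_j^{\ast} \in [s_{j-1},s_j]$, and form
\begin{equation*}
S_n := \sum_{j=1}^n \varrho^{(\zeta,\eta)}(s_j^{\ast})\,\bigl[\mathfrak{X}(s_j,\varpi) - \mathfrak{X}(s_{j-1},\varpi)\bigr].
\end{equation*}
Since $\varrho^{(\zeta,\eta)}$ is continuous on $[-1,1]$ for $\zeta,\eta \geq 0$, these sums converge in probability to $Y_{\zeta,\eta}(v,\varpi)$ as the mesh tends to $0$, the integral existing in probability by the results recalled in the Introduction. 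Using the independence of the increments of $\mathfrak{X}$, the characteristic function of $S_n$ factorises as
\begin{equation*}
E[\exp(i\theta S_n)] = \prod_{j=1}^n \exp\bigl(-(s_j - s_{j-1})\,|\varrho^{(\zeta,\eta)}(s_j^{\ast})|^\chi\,|\theta|^\chi\bigr) = \exp\Bigl(-|\theta|^\chi \sum_{j=1}^n (s_j-s_{j-1})\,[\varrho^{(\zeta,\eta)}(s_j^{\ast})]^\chi\Bigr).
\end{equation*}

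Next I would pass to the limit. Because $\varrho^{(\zeta,\eta)}$ is continuous, $[\varrho^{(\zeta,\eta)}]^\chi$ is Riemann integrable on $[a,v]$, so the exponent is a genuine Riemann sum converging to $-|\theta|^\chi \int_a^v [\varrho^{(\zeta,\eta)}(s)]^\chi\,ds$. As convergence of $S_n$ to $Y_{\zeta,\eta}(v,\varpi)$ in probability forces convergence of the associated characteristic functions, I obtain
\begin{equation*}
E[\exp(i\theta Y_{\zeta,\eta}(v,\varpi))] = \exp\Bigl(-|\theta|^\chi \int_a^v [\varrho^{(\zeta,\eta)}(s)]^\chi\,ds\Bigr),
\end{equation*}
which is exactly the characteristic function of a symmetric $\chi$-stable variable with scale $\lambda(v) = \int_a^v [\varrho^{(\zeta,\eta)}(s)]^\chi\,ds$. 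Applying the same computation to an increment $Y_{\zeta,\eta}(t,\varpi)-Y_{\zeta,\eta}(s,\varpi) = \int_s^t \varrho^{(\zeta,\eta)}\,d\mathfrak{X}$ shows it too is symmetric $\chi$-stable, and independence of increments of $Y_{\zeta,\eta}$ over disjoint subintervals is inherited from that of $\mathfrak{X}$; hence all finite-dimensional distributions are symmetric $\chi$-stable and $Y_{\zeta,\eta}(v,\varpi)$ is a symmetric stable process of the same index $\chi$.

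The main obstacle I expect is the rigorous justification of interchanging the limit with the characteristic function: one must verify that convergence in probability of the tagged sums $S_n$ transfers to the limiting exponential form while the deterministic exponents converge as Riemann sums. Both hinge on the continuity and boundedness of the Jacobi weight on $[a,b]$ for $\zeta,\eta \geq 0$, which simultaneously secure the existence of the integral in probability and the Riemann integrability of $[\varrho^{(\zeta,\eta)}]^\chi$; care is needed so that the tagged sums are regular enough for the product of increment characteristic functions to pass cleanly to the limit.
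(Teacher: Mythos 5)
Your argument is correct, and it proves the lemma by a genuinely different route than the paper does. The paper works directly from the definition of stability (citing Definition 2.1.1 of Samorodnitsky and Taqqu): it takes independent copies $\mathfrak{X}^{(1)},\mathfrak{X}^{(2)}$ of the process, invokes $A\mathfrak{X}^{(1)}+B\mathfrak{X}^{(2)}\stackrel{d}{=}C\mathfrak{X}$ with shift $D=0$ by symmetry, and then integrates $\varrho^{(\zeta,\eta)}$ against both sides to conclude $AY_{\zeta,\eta}^{(1)}+BY_{\zeta,\eta}^{(2)}\stackrel{d}{=}CY_{\zeta,\eta}$. You instead compute the characteristic function of $Y_{\zeta,\eta}(v,\varpi)$ from tagged Riemann--Stieltjes sums, factor it using independence of increments of $\mathfrak{X}$, and pass to the limit. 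What your route buys: the explicit scale $\lambda(v)=\int_a^v[\varrho^{(\zeta,\eta)}(s)]^\chi\,ds$, an honest verification that the index is exactly $\chi$, and independence of increments of $Y_{\zeta,\eta}$, hence symmetric $\chi$-stability of all finite-dimensional distributions --- whereas the paper's argument only addresses one-dimensional marginals and, as written, upgrades the distributional identity for the copies of $\mathfrak{X}$ to a pointwise equation before integrating, a step that strictly requires equality in law at the level of processes (true for stable processes, but not justified there); your characteristic-function computation sidesteps that issue entirely. What the paper's route buys: brevity, and no need to assume the normalized increment characteristic function $\exp\{v\varphi(\mathfrak{x})\}$ with $\varphi(\mathfrak{x})=-C|\mathfrak{x}|^\chi$ --- though the paper itself assumes exactly this form from Lemma \ref{4L.2} onward, so your extra hypothesis costs nothing in context. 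One point worth noting: your scale carries $[\varrho^{(\zeta,\eta)}]^\chi\,ds$, the correct $\chi$-th power coming from $\varphi(\theta\varrho^{(\zeta,\eta)}(s))$, whereas the paper's subsequent formulas (Lemmas \ref{4L.2} and \ref{4L.4}) carry $d\mu_{\zeta,\eta}=\varrho^{(\zeta,\eta)}\,ds$; on the subintervals used there, where $\varrho^{(\zeta,\eta)}\leq 1$ and $\chi\geq 1$, one has $[\varrho^{(\zeta,\eta)}]^\chi\leq\varrho^{(\zeta,\eta)}$, so your sharper expression is consistent with, and in fact implies, the estimates the paper needs downstream.
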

\begin{proof}
Let $\mathfrak{X}^{(1)}(v,\varpi)$ and $\mathfrak{X}^{(2)}(v,\varpi)$ be independent copies of the random variable $\mathfrak{X}(v,\varpi).$ Consider the Jacobi weight
function $\varrho^{(\zeta,\eta)}(v,\varpi)$ in the interval $[-1,1],$ for $\zeta,\eta \geq 0.$
 Define
 \begin{equation*}
 Y_{\zeta,\eta}^{(1)}(v,\varpi):=\int_{-1}^v \varrho^{(\zeta,\eta)}(s)d\mathfrak{X}^{(1)}(s,\varpi)\; \text{and} \;Y_{\zeta,\eta}^{(2)}(v,\varpi):=\int_{-1}^v \varrho^{(\zeta,\eta)}(s)d\mathfrak{X}^{(2)}(s,\varpi).  
 \end{equation*} 
The $Y_{\zeta,\eta}^{(1)}(v,\varpi)$ and $Y_{\zeta,\eta}^{(2)}(v,\varpi)$ are random copies of $Y_{\zeta,\eta}(v,\varpi).$
Now fix $A>0$ and $B>0.$ Since $\mathfrak{X}(v,\varpi)$ is stable, there exist a $C>0$ and $D \in \mathbb{R}$ such that $A\mathfrak{X}^{(1)}(v,\varpi)+B\mathfrak{X}^{(2)}(v,\varpi)$
is equality in distribution to $C\mathfrak{X}(v,\varpi)+D$ \cite[Definition 2.1.1]{ST}.
In the case of symmetric stable $D=0.$
Now
\begin{equation*}
A\mathfrak{X}^{(1)}(v,\varpi)+B\mathfrak{X}^{(2)}(v,\varpi) = C\mathfrak{X}(v,\varpi),
\end{equation*}
i.e.,
\begin{eqnarray*}
 A\int_{-1}^{v}\varrho^{(\zeta,\eta)}(s) d\mathfrak{X}^{(1)}(s,\varpi)&+&B \int_{-1}^{v} \varrho^{(\zeta,\eta)}(s) d\mathfrak{X}^{(2)}(s,\varpi)\\
 &&=C\int_{-1}^{v}\varrho^{(\zeta,\eta)}(s)d\mathfrak{X}(s,\varpi),  
\end{eqnarray*}
i.e.,
\begin{equation}\label{d.2}
 AY_{\zeta,\eta}^{(1)}(v,\varpi)+BY_{\zeta,\eta}^{(2)}(v,\varpi)= CY_{\zeta,\eta}(v,\varpi).
\end{equation}
The linear combination in left hand side of (\ref{d.2}) have same distribution as that of the right hand side.
This implies that $Y_{\zeta,\eta}(v,\varpi)$ is a symmetric stable process.
\end{proof}
To establish the existence of the integral
\begin{equation}\label{5.1}
\int_{-1}^1 \mathfrak{g}(v)dY_{\zeta,\eta}(v,\varpi)
\end{equation}
 in probability, 
we will proceed through establishing it for functions in a dense subset $\mathrm{C}_{[-1,1]}^{(\zeta,\eta)}$ of the space
$ \mathrm{L}_{[-1,1]}^p(d\mu_{\zeta,\eta}).$
\begin{definition}
The space $\mathrm{C}_{[-1,1]}^{(\zeta,\eta)}$ is the set of all real valued continuous functions on $(-1,1)$ for which
\begin{equation*}
\lim_{|v| \rightarrow 1} \mathfrak{g}(v) \varrho^{(\zeta,\eta)}(v)=0,
\end{equation*}
where $\varrho^{(\zeta,\eta)}(v):={(1-v)}^\zeta {(1+v)}^\eta.$
\end{definition}
The norm endowed in this space is
\begin{equation*}
||\mathfrak{g}||_{\mathrm{C}_\infty(d\mu_{\zeta,\eta})}=||\mathfrak{g}\varrho^{(\zeta,\eta)}(v)||:=\max_{v \in [-1,1]}|\mathfrak{g}(v)|\varrho^{(\zeta,\eta)}(v).
\end{equation*}
The space $\mathrm{C}_{[-1,1]}^{(\zeta,\eta)}$ is complete with respect to this norm. It is dense in the space
$ \mathrm{L}_{[-1,1]}^p(d\mu_{\zeta,\eta}).$
\begin{definition}\label{d.5.1}
The space $\mathrm{C}_{[-1,1]}^{'(\zeta,\eta)}$ is the set of all real valued continuous functions in $\mathrm{C}_{[-1,1]}^{(\zeta,\eta)}$ with continuous derivative.  
\end{definition}

The space $\mathrm{C}_{[-1,1]}^{'(\zeta,\eta)}$ is dense in $\mathrm{C}_{[-1,1]}^{(\zeta,\eta)},$ and hence dense in $\mathrm{L}_{[-1,1]}^p(d\mu_{\zeta,\eta}).$ 
It is known that if $X(t,\omega)$ is a continuous stochastic process and $f$ is a continuous function in $[a,b],$ then the stochastic integral
\begin{equation}\label{4.0.1}
\int_a^b \mathfrak{g}(v)d\mathfrak{X}(v,\varpi)
\end{equation}
exists in probability (c.f. Lukacs \cite{L}).
Followed by this,
the stochastic integral \eqref{5.1} exists in probability in the interval $[-1,1]$ 
 for $\mathfrak{g} \in \mathrm{C}_{[-1,1]}^{'(\zeta,\eta)},$ since $\varrho^{(\zeta,\eta)}(t)$ is continuous in $[-1,1]$ for $\zeta,\eta \geq 0.$

\section{Main Results}
\subsection{Existence of the stochastic integral}
Let $\mathfrak{X}(v,\varpi)$ be a stable process of index $\chi \in [1,2]$ which is symmetric and consider the Jacobi weight $\varrho^{(\zeta,\eta)}(v):={(1-v)}^\zeta{(1+v)}^\eta$ in the interval $[-1,1].$ Then
\begin{equation}\label{0.4}
 Y_{\zeta,\eta}(v,\varpi):=\int_{-1}^{v}\varrho^{(\zeta,\eta)}(s)d\mathfrak{X}(s,\varpi)
\end{equation}
is a symmetric stable process of index $\chi \in [1,2]$ for $v \in [-1,1].$
\\
The inequality which will be used to prove the Theorem \ref{4T.1} is established in the following lemma.
\begin{lemma} \label{4L.4}
Let $\mathfrak{g}$ be any function in $\mathrm{C}_{[-1,1]}^{'(\zeta,\eta)}.$ If $Y_{\zeta,\eta}(v,\varpi)$ is a
 symmetric stable process of index $\chi \in (0,2],$ then for all $\varepsilon >0$ and $\zeta,\eta \geq 0,$
\begin{equation*}
P\Bigg(\Bigg|\int_{-1}^1 \mathfrak{g}(v)dY_{\zeta,\eta}(v,\varpi)\Bigg|> \epsilon\Bigg)
\leq
\frac{2^{\chi+1}C}{(\chi+1)\varepsilon^\chi}\int_{-1}^1 |\mathfrak{g}(v)|^\chi d\mu_{\zeta,\eta}(v),
\end{equation*}
where $C$ is a positive constant.
\end{lemma}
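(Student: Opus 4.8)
The plan is to reduce the tail estimate to a statement about the characteristic function of the single random variable $Z:=\int_a^b \mathfrak{g}(v)\,dY_{\zeta,\eta}(v,\varpi)$, and then to convert that information into a bound on $P(|Z|>\varepsilon)$ by a classical truncation inequality. First I would invoke the existence of $Z$ (guaranteed for $\mathfrak{g}\in\mathrm{C}_{[a,b]}^{'(\zeta,\eta)}$ by the discussion preceding Lemma~\ref{4L.2}) together with Lemma~\ref{4L.2}, which exhibits $Z$ as a symmetric stable variable. Tracking the stable index $\chi$ through that computation, its characteristic function is $\phi_Z(\mathfrak{x})=\exp\{-C\,M\,|\mathfrak{x}|^\chi\}$ with $\varphi(\mathfrak{x})=-C|\mathfrak{x}|^\chi$ as in Lemma~\ref{4L.2} and scale $M=\int_a^b|\mathfrak{g}(v)|^\chi\bigl(\varrho^{(\zeta,\eta)}(v)\bigr)^\chi\,dv$. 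Because $[a,b]$ is chosen so that $\varrho^{(\zeta,\eta)}\le 1$, for $\chi\ge 1$ one has $\bigl(\varrho^{(\zeta,\eta)}\bigr)^\chi\le \varrho^{(\zeta,\eta)}$ pointwise, so $M\le \int_a^b|\mathfrak{g}(v)|^\chi\,d\mu_{\zeta,\eta}(v)$; this is exactly where the hypothesis ``$\varrho^{(\zeta,\eta)}$ bounded by one'' enters, and it is the reason the right-hand side is expressed through the weighted measure itself rather than its $\chi$-th power.

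For the conversion step I would use that $Z$ is symmetric, so $\phi_Z$ is real and $\phi_Z(t)=E[\cos(tZ)]$. The key identity is
\[
\frac1T\int_0^T\bigl(1-\phi_Z(t)\bigr)\,dt=E\Bigl[1-\tfrac{\sin(TZ)}{TZ}\Bigr],
\]
obtained by integrating $1-\cos(tZ)$ in $t$ and applying Tonelli's theorem (the integrand is nonnegative and bounded). Since $|\sin x/x|\le 1/|x|$, the elementary bound $1-\frac{\sin x}{x}\ge \tfrac12$ holds for $|x|\ge 2$, while $1-\frac{\sin x}{x}\ge 0$ everywhere; choosing $T=2/\varepsilon$ therefore gives
\[
E\Bigl[1-\tfrac{\sin(TZ)}{TZ}\Bigr]\ge \tfrac12\,P\bigl(|TZ|\ge 2\bigr)=\tfrac12\,P\bigl(|Z|\ge\varepsilon\bigr).
\]

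Combining the last two displays yields $P(|Z|\ge\varepsilon)\le \varepsilon\int_0^{2/\varepsilon}\bigl(1-\phi_Z(t)\bigr)\,dt$. At this point I would linearize the exponential through $1-e^{-x}\le x$ ($x\ge0$), so that $1-\phi_Z(t)=1-e^{-CMt^\chi}\le CM\,t^\chi$, and evaluate $\int_0^{2/\varepsilon}t^\chi\,dt=\frac{(2/\varepsilon)^{\chi+1}}{\chi+1}$. This produces precisely
\[
P\bigl(|Z|\ge\varepsilon\bigr)\le \varepsilon\cdot CM\cdot\frac{2^{\chi+1}\varepsilon^{-(\chi+1)}}{\chi+1}=\frac{2^{\chi+1}C}{(\chi+1)\varepsilon^\chi}\,M,
\]
and replacing $M$ by the larger weighted integral $\int_a^b|\mathfrak{g}(v)|^\chi\,d\mu_{\zeta,\eta}(v)$ from the first paragraph finishes the estimate.

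The part I expect to be the main obstacle is the conversion step: one must pick the truncation level $T=2/\varepsilon$ so that the constant coming out of $\int_0^T t^\chi\,dt$ is exactly $2^{\chi+1}/(\chi+1)$, and one must keep the two inequalities correctly oriented — the lower bound $1-\sin x/x\ge\frac12$ has to be valid on the whole region $|x|\ge 2$ (which is why $|\sin x/x|\le 1/|x|$ is used, not a local estimate at $x=2$), and the upper bound $1-e^{-x}\le x$ must be applied \emph{after}, not before, the truncation inequality so that the direction of the final inequality is preserved. A secondary point to verify carefully is that the scale $M$ really carries a $\chi$-th power of $\mathfrak{g}$ (making the bound homogeneous of the correct degree), which is where the exponent $\chi$ of the stable law, rather than the first power, must be tracked through Lemma~\ref{4L.2}; the passage from $(\varrho^{(\zeta,\eta)})^\chi$ to $\varrho^{(\zeta,\eta)}$ then relies on $\chi\ge 1$ together with the boundedness of the weight on $[a,b]$.
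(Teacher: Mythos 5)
Your proof is correct, and its skeleton is the same as the paper's: represent $Z=\int_a^b \mathfrak{g}(v)\,dY_{\zeta,\eta}(v,\varpi)$ through its stable characteristic function, apply the classical truncation inequality with cutoff $T=2/\varepsilon$, and linearize via $1-e^{-x}\le x$ to get the constant $2^{\chi+1}/(\chi+1)$. Two execution differences are worth recording, and both favor your version. First, where the paper invokes Lemma~\ref{4L.5} and Lemma~\ref{4L.6} from \cite{SM}, you re-derive the truncation inequality from scratch via $\frac1T\int_0^T(1-\phi_Z(t))\,dt=E\bigl[1-\frac{\sin(TZ)}{TZ}\bigr]\ge\frac12 P(|Z|\ge 2/T)$; this is the same quantitative content, made self-contained. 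Second, and more substantively, you apply the hypothesis that $\varrho^{(\zeta,\eta)}$ is bounded by one at the level of the scale parameter, writing the scale as $M=\int_a^b|\mathfrak{g}(v)|^\chi\bigl(\varrho^{(\zeta,\eta)}(v)\bigr)^\chi dv$ and using $\bigl(\varrho^{(\zeta,\eta)}\bigr)^\chi\le\varrho^{(\zeta,\eta)}$ for $\chi\ge1$ to pass to $\int_a^b|\mathfrak{g}|^\chi d\mu_{\zeta,\eta}$. The paper instead asserts $\Phi(\mathfrak{x})=\exp\{-C|\mathfrak{x}|^\chi\int_a^b|\mathfrak{g}|^\chi d\mu_{\zeta,\eta}\}$ outright (note this does not match the literal statement of Lemma~\ref{4L.2}, which has $\mathfrak{g}$ to the first power; you correctly track the $\chi$-th powers of both $\mathfrak{g}$ and $\varrho^{(\zeta,\eta)}$), and then deploys $\varrho^{(\zeta,\eta)}\le1$ \emph{inside} the truncation integral, writing $\int_{-2/\varepsilon}^{2/\varepsilon}\Phi(v)\,d\mu_{\zeta,\eta}(v)$ even though $v$ there is the Fourier dual variable, on which the Jacobi weight has no business acting, and then drops the weight under the integrand $1-CB|v|^\chi$, which can be negative, so that step's direction is unjustified as written. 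Your placement of the weight bound sidesteps both slips. One caveat you already flag: $\bigl(\varrho^{(\zeta,\eta)}\bigr)^\chi\le\varrho^{(\zeta,\eta)}$ needs $\chi\ge1$, so your argument covers $\chi\in[1,2]$, consistent with the paper's standing assumption but not with the range $\chi\in(0,2]$ in the lemma's statement; the paper's own proof does not cover $\chi<1$ either.
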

\begin{proof}
Let $\mathfrak{g}$ be any function in $\mathrm{C}_{[-1,1]}^{'(\zeta,\eta)}$ and $Y_{\zeta,\eta}(v,\varpi)$ be a
 symmetric stable process of index $\chi \in (0,2].$
Now by Lemma 3.12 of \cite{SM},
\begin{eqnarray*}
P\Bigg(\Bigg|\int_{-1}^1 \mathfrak{g}(v)dY_{\zeta,\eta}(v,\varpi)\Bigg|> \epsilon\Bigg)
&=&P\Bigg(\Bigg|\int_{-1}^1 \mathfrak{g}(v)\varrho^{(\zeta,\eta)}(v) d\mathfrak{X}(v,\varpi)\Bigg|> \epsilon\Bigg)\\
&\leq&
\frac{2^{\chi+1}C}{(\chi+1)\varepsilon^\chi}\int_{-1}^1 |\mathfrak{g}(v)|^\chi d\mu_{\zeta,\eta}(v)
\end{eqnarray*}
Since $\varrho^{(\zeta,\eta)}(v)$ is bounded in the interval $[-1,1],$ thus the last expression in the above lemma is bounded by $\int\limits_{-1}^1 |\mathfrak{g}(v)|^\chi d\mu_{\zeta,\eta}(v).$ 
\end{proof}
\begin{theorem}\label{4T.1}
If $\mathfrak{g} \in \mathrm{L}_{[-1,1]}^\mathrm{p}(d\mu_{\zeta,\eta}),$ then the integral (\ref{5.1})
can be defined in probability for $\mathrm{p} \geq \chi \geq 1$ and $\zeta, \eta \geq 0.$
\end{theorem}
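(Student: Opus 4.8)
The plan is to define the integral for an arbitrary $\mathfrak{g} \in \mathrm{L}_{[a,b]}^\mathrm{p}(d\mu_{\zeta,\eta})$ by approximating $\mathfrak{g}$ in the $\mathrm{L}^\mathrm{p}$-norm by functions from the class $\mathrm{C}_{[a,b]}^{'(\zeta,\eta)}$, on which the integral is already known to exist in probability, and then passing to the limit. Since convergence in probability is governed by a complete metric (e.g. the Ky Fan metric), it suffices to produce an approximating sequence whose integrals form a Cauchy sequence in probability; the limit then serves as the definition of $\int_a^b \mathfrak{g}(v)\,dY_{\zeta,\eta}(v,\varpi)$.

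First I would invoke the density of $\mathrm{C}_{[a,b]}^{'(\zeta,\eta)}$ in $\mathrm{L}_{[a,b]}^\mathrm{p}(d\mu_{\zeta,\eta})$ to choose a sequence $\{\mathfrak{g}_n\}$ in $\mathrm{C}_{[a,b]}^{'(\zeta,\eta)}$ with $\|\mathfrak{g}_n - \mathfrak{g}\|_{\mathrm{L}_{[a,b]}^\mathrm{p}(d\mu_{\zeta,\eta})} \to 0$. For each $n$ the random variable $I_n := \int_a^b \mathfrak{g}_n(v)\,dY_{\zeta,\eta}(v,\varpi)$ exists in probability, as recalled just before Lemma \ref{4L.2}. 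Applying the tail estimate of Lemma \ref{4L.4} to the difference $\mathfrak{g}_n - \mathfrak{g}_m \in \mathrm{C}_{[a,b]}^{'(\zeta,\eta)}$ gives, for every $\varepsilon > 0$,
\begin{equation*}
P\Big(|I_n - I_m| > \varepsilon\Big) \leq \frac{2^{\chi+1}C}{(\chi+1)\varepsilon^\chi}\int_a^b |\mathfrak{g}_n(v) - \mathfrak{g}_m(v)|^\chi\, d\mu_{\zeta,\eta}(v).
\end{equation*}

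The key step is to dominate the right-hand integral by the $\mathrm{L}^\mathrm{p}$-distance. Since $\varrho^{(\zeta,\eta)}$ is bounded by one on $[a,b]$ and $[a,b]$ is a finite interval, $d\mu_{\zeta,\eta}$ is a finite measure there; hence when $\mathrm{p} \geq \chi$ Hölder's inequality with exponents $\mathrm{p}/\chi$ and its conjugate yields
\begin{equation*}
\int_a^b |\mathfrak{g}_n - \mathfrak{g}_m|^\chi\, d\mu_{\zeta,\eta} \leq \big(\mu_{\zeta,\eta}([a,b])\big)^{1 - \chi/\mathrm{p}}\,\|\mathfrak{g}_n - \mathfrak{g}_m\|_{\mathrm{L}_{[a,b]}^\mathrm{p}(d\mu_{\zeta,\eta})}^{\chi}.
\end{equation*}
Because $\{\mathfrak{g}_n\}$ converges, and is therefore Cauchy, in $\mathrm{L}_{[a,b]}^\mathrm{p}(d\mu_{\zeta,\eta})$, the right side tends to $0$ as $n,m \to \infty$, so $P(|I_n - I_m| > \varepsilon) \to 0$; that is, $\{I_n\}$ is Cauchy in probability.

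Finally, I would appeal to the completeness of the space of random variables under convergence in probability to obtain a random variable $I$ with $I_n \to I$ in probability, and set $\int_a^b \mathfrak{g}(v)\,dY_{\zeta,\eta}(v,\varpi) := I$. An interlacing argument, again using Lemma \ref{4L.4} and the same Hölder estimate applied to the differences of two approximating sequences, shows that the limit does not depend on the chosen sequence, so the definition is unambiguous. I expect the only genuine subtlety to be the density of $\mathrm{C}_{[a,b]}^{'(\zeta,\eta)}$ in $\mathrm{L}_{[a,b]}^\mathrm{p}(d\mu_{\zeta,\eta})$ together with the verification that $\varrho^{(\zeta,\eta)}$ is bounded on $[a,b]$ so that the measure is finite and Hölder applies for $\mathrm{p} \geq \chi$; once the Cauchy-in-probability property is established, the passage to the limit is routine.
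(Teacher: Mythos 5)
Your proposal follows essentially the same route as the paper's own proof: approximate $\mathfrak{g}$ by smooth functions on which the integral already exists, use the tail estimate of Lemma \ref{4L.4} to show the approximating integrals are Cauchy in probability, invoke completeness of convergence in probability to extract the limit, and check independence of the approximating sequence by comparing two sequences. If anything, you are more careful than the paper on two points it glosses over: you take the approximants in $\mathrm{C}_{[a,b]}^{'(\zeta,\eta)}$ (where Lemma \ref{4L.4} actually applies, rather than merely continuous functions), and you make explicit, via H\"older's inequality with exponent $\mathrm{p}/\chi$ and the finiteness of $\mu_{\zeta,\eta}$ on $[a,b]$, the passage from $\mathrm{L}^\mathrm{p}$-convergence of the $\mathfrak{g}_n$ to the $\mathrm{L}^\chi$-smallness required on the right-hand side of the tail bound.
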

\begin{proof}
 If $\mathfrak{g} \in \mathrm{L}_{[-1,1]}^\mathrm{p}(d\mu_{\gamma,\delta}),$ where $d\mu_{\zeta,\eta}(v)=\varrho^{(\zeta,\eta)}(v)dv,$ then there exist a sequence of continuous functions $\mathfrak{g}_m$ in the interval $ [-1,1],$ 
 such that
\begin{equation}\label{4.2.1}
\lim_{m \rightarrow \infty} \int\limits_{-1}^1 |\mathfrak{g}_m(v)-\mathfrak{g}(v)|^\mathrm{p} d\mu_{\zeta,\eta}(v)=0.
\end{equation}
Thus,
\begin{equation}\label{4.2.2}
\lim_{n,m \rightarrow \infty}\int\limits_{-1}^1 |\mathfrak{g}_m(v)-\mathfrak{g}_n(v)|^\mathrm{p} d\mu_{\zeta,\eta}(v)=0.
\end{equation}
By Lemma \ref{4L.4}, for $\mathrm{p} \geq \chi \in [1,2]$ and $\zeta,\eta \geq 0,$
\begin{equation*}
\begin{split}
&P\Bigg(\Bigg|\int\limits_{-1}^1 \mathfrak{g}_n(v)dY_{\zeta,\eta}(v,\varpi)- \int\limits_{-1}^1 \mathfrak{g}_m(v)dY_{\zeta,\eta}(v,\varpi)\Bigg|> \varepsilon\Bigg)\\
&=P\Bigg(\Bigg|\int\limits_{-1}^1 \Big(\mathfrak{g}_n(v)- \mathfrak{g}_m(v)\Big)dY_{\zeta,\eta}(v,\varpi)\Bigg|> \varepsilon\Bigg)\\
&\leq
\frac{2^{\chi+1}C}{(\chi+1)\varepsilon^\chi}\Big\{\int_{-1}^1 |\mathfrak{g}_n(v)-\mathfrak{g}_m(v)|^\chi d\mu_{\zeta,\eta}(v)\Big\}.
\end{split}
\end{equation*}
By equation (\ref{4.2.2}),
we get
\begin{equation*}
\lim_{n,m \rightarrow \infty} P\Bigg(\Bigg|\int\limits_{-1}^1 \mathfrak{g}_n(v)dY_{\zeta,\eta}(v,\varpi)- \int\limits_{-1}^1 \mathfrak{g}_m(v)dY_{\zeta,\eta}(v,\varpi)\Bigg|> \varepsilon\Bigg)=0.
\end{equation*}
Thus the integral 
\begin{equation*}
\int_{-1}^1 \mathfrak{g}_m(v) dY_{\zeta,\eta}(v,\varpi)
\end{equation*}
 converges in probability, for $\zeta,\eta \geq 0.$
 Hence by Kawata \cite[p.~479]{K1}, there exists a random variable $\mathfrak{X}$ such that 
 \begin{equation*}
\lim_{m\rightarrow \infty}P\Bigg(\Bigg|\int\limits_{-1}^1 \mathfrak{g}_m(v)dY_{\zeta,\eta}(v,\varpi)-\mathfrak{X}\Bigg|> \varepsilon\Bigg)=0.
 \end{equation*}
This is independent of choice of sequence of functions $\mathfrak{g}_m(v).$
Suppose $\mathfrak{g} \in \mathrm{L}_{[-1,1]}^\mathrm{p}(d\mu_{\zeta,\eta}),$ we can obtain a continuous sequence $h_m(v)$ in the segment $ [-1,1],$ such that
\begin{equation*}
\lim_{m \rightarrow \infty}\int\limits_{-1}^1 \Big|h_m(v)-\mathfrak{g}(v) \Big|^\mathrm{p} d\mu_{\zeta,\eta}(v)=0.
\end{equation*}
Thus
\begin{equation}\label{4.2.3}
\lim_{m \rightarrow \infty}\int_{-1}^1 |\mathfrak{g}_m(v)-h_m(v)|^\mathrm{p} d\mu_{\zeta,\eta}(v)=0.
\end{equation}
By Lemma \ref{4L.4} and equation (\ref{4.2.3}), we get
\begin{equation*}
\lim_{m \rightarrow \infty}P\Bigg(\Bigg|\int\limits_{-1}^1 \mathfrak{g}_m(v)dY_{\zeta,\eta}(v,\varpi)-\int\limits_{-1}^1 h_m(v) dY_{\zeta,\eta}(v,\varpi)\Bigg|> \varepsilon\Bigg)=0.
\end{equation*}
Hence the integral $\int\limits_{-1}^1 \mathfrak{g}_m(v)dY_{\zeta,\eta}(v,\varpi)$ converges uniquely in probability to $\int\limits_{-1}^1 \mathfrak{g}(v)dY_{\zeta,\eta}(v,\varpi).$
\end{proof}

 The following lemma
 is an extension of Lemma \ref{4L.4}, for the functions $\mathfrak{g}$ in $\mathrm{L}_{[-1,1]}^\mathrm{p}(d\mu_{\zeta,\eta}).$
\begin{lemma}\label{4L.7}
Let $\mathfrak{g} \in \mathrm{L}_{[-1,1]}^\mathrm{p}(d\mu_{\zeta,\eta}),\; \mathrm{p} \geq \chi,$ 
then
\begin{equation*}
P\Bigg(\Bigg|\int_{-1}^1 \mathfrak{g}(v) dY_{\zeta,\eta}(v,\varpi)\Bigg|> \varepsilon\Bigg)
\leq
\frac{2^{\chi+1}C}{(\chi+1)\varepsilon'^\chi}\Big\{\int_{-1}^1 |\mathfrak{g}(v)|^\chi d\mu_{\zeta,\eta}(v)\Big\},  
\end{equation*}
where $C$ is a positive constant, for $\zeta,\eta \geq 0$ and $0 <\varepsilon'<\varepsilon.$
\end{lemma}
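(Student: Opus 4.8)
The plan is to deduce Lemma~\ref{4L.7} from Lemma~\ref{4L.4} by a density-plus-truncation argument, using the fact (established in Theorem~\ref{4T.1}) that the stochastic integral of an $\mathrm{L}^\mathrm{p}$ function is the limit in probability of the integrals of approximating functions from $\mathrm{C}_{[a,b]}^{'(\zeta,\eta)}$, for which Lemma~\ref{4L.4} already provides the desired tail estimate. First I would fix $\mathfrak{g} \in \mathrm{L}_{[a,b]}^\mathrm{p}(d\mu_{\zeta,\eta})$ and choose a sequence $\mathfrak{g}_m \in \mathrm{C}_{[a,b]}^{'(\zeta,\eta)}$ with $\int_a^b |\mathfrak{g}_m - \mathfrak{g}|^\mathrm{p}\, d\mu_{\zeta,\eta} \to 0$, exactly as in the proof of Theorem~\ref{4T.1}. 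Setting $I := \int_a^b \mathfrak{g}(v)\, dY_{\zeta,\eta}(v,\varpi)$ and $I_m := \int_a^b \mathfrak{g}_m(v)\, dY_{\zeta,\eta}(v,\varpi)$, Theorem~\ref{4T.1} guarantees that $I_m \to I$ in probability.

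The second step is the event-splitting that produces the auxiliary parameter $\varepsilon'$. For any $0 < \varepsilon' < \varepsilon$ one has the inclusion $\{|I| > \varepsilon\} \subseteq \{|I_m| > \varepsilon'\} \cup \{|I - I_m| > \varepsilon - \varepsilon'\}$, because $|I_m| \geq |I| - |I - I_m|$ forces $|I_m| > \varepsilon'$ whenever $|I| > \varepsilon$ and $|I - I_m| \leq \varepsilon - \varepsilon'$. Taking probabilities and applying Lemma~\ref{4L.4} to the admissible function $\mathfrak{g}_m$ gives
$$P\big(|I| > \varepsilon\big) \leq \frac{2^{\chi+1}C}{(\chi+1)\varepsilon'^\chi}\int_a^b |\mathfrak{g}_m(v)|^\chi\, d\mu_{\zeta,\eta}(v) + P\big(|I - I_m| > \varepsilon - \varepsilon'\big).$$
As $m \to \infty$ the last term vanishes by convergence in probability, and this is precisely why the strict inequality $\varepsilon' < \varepsilon$ is indispensable: the gap $\varepsilon - \varepsilon' > 0$ is what drives that probability to zero. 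One cannot collapse $\varepsilon'$ to $\varepsilon$ afterwards, which is why the conclusion is necessarily stated with a strictly smaller $\varepsilon'$.

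Finally I would pass to the limit in the integral on the right-hand side. Here the hypothesis $\mathrm{p} \geq \chi$ enters decisively: since $\varrho^{(\zeta,\eta)}$ is bounded by one on $[a,b]$, the measure $\mu_{\zeta,\eta}$ is finite there, so the continuous embedding $\mathrm{L}^\mathrm{p}(d\mu_{\zeta,\eta}) \hookrightarrow \mathrm{L}^\chi(d\mu_{\zeta,\eta})$ (via Hölder on a finite measure space) upgrades $\mathrm{L}^\mathrm{p}$ convergence to $\mathrm{L}^\chi$ convergence, whence $\int_a^b |\mathfrak{g}_m|^\chi\, d\mu_{\zeta,\eta} \to \int_a^b |\mathfrak{g}|^\chi\, d\mu_{\zeta,\eta}$ by continuity of the norm. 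Letting $m \to \infty$ in the displayed inequality then yields the claimed bound. The main obstacle is coordinating the two limiting operations: the probability estimate forces the introduction of $\varepsilon'$, while the technical heart is the justification that the $\mathrm{L}^\chi$ integrals converge from the $\mathrm{L}^\mathrm{p}$ approximation, which is exactly where $\mathrm{p} \geq \chi$ together with the finiteness of $\mu_{\zeta,\eta}$ on $[a,b]$ is used.
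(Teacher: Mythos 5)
Your proposal is correct and follows essentially the same route as the paper: approximate $\mathfrak{g}$ in $\mathrm{L}^{\mathrm{p}}(d\mu_{\zeta,\eta})$ by continuous functions $\mathfrak{g}_m$, split the tail event with a gap parameter (your $\varepsilon-\varepsilon'$ is the paper's $\psi$), kill the error term via the convergence in probability from Theorem~\ref{4T.1}, apply Lemma~\ref{4L.4} to the approximants, and pass to the limit in the $\mathrm{L}^\chi$ integrals. If anything, your justification of that last limit via the H\"older embedding $\mathrm{L}^{\mathrm{p}}(d\mu_{\zeta,\eta})\hookrightarrow \mathrm{L}^{\chi}(d\mu_{\zeta,\eta})$ on the finite measure space $[a,b]$ is cleaner than the paper's bare appeal to dominated convergence, which supplies no dominating function.
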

\begin{proof}
For $\mathfrak{g} \in \mathrm{L}_{[-1,1]}^\mathrm{p}(d\mu_{\zeta,\eta}),$ we can find a sequence of continuous function, such that the equation (\ref{4.2.1}) is holds.\\
Now by Lemma \ref{4L.4}, for $\varepsilon >0$ and $\zeta,\eta \geq 0,$ 
\begin{eqnarray*}
P\Bigg(\Bigg|\int_{-1}^1 \mathfrak{g}(v)dY_{\zeta,\eta}(v,\varpi)\Bigg|> \varepsilon\Bigg)
&=&P\Bigg(\Bigg|\int_{-1}^1 \Big(\mathfrak{g}(v)-\mathfrak{g}_m(v)\Big)dY_{\zeta,\eta}(v,\varpi)\Bigg|> \varepsilon\Bigg)\\
&\leq& P\Bigg(\Bigg|\int_{-1}^1 \Big(\mathfrak{g}(v)-\mathfrak{g}_m(v)\Big)dY_{\zeta,\eta}(v,\varpi)\Bigg|> \psi \Bigg)\\
&&+ P\Bigg(\Bigg|\int_{-1}^1 \mathfrak{g}_m(v)dY_{\zeta,\eta}(v,\varpi)\Bigg|> \varepsilon-\psi\Bigg).
\end{eqnarray*}
By Theorem \ref{4T.1}, 
\begin{equation*}
\lim_{m \rightarrow \infty}P\Bigg(\Bigg|\int_{-1}^1 \Big(\mathfrak{g}(v)-\mathfrak{g}_m(v)\Big)dY_{\zeta,\eta}(v,\varpi)\Bigg|> \psi \Bigg)=0.
\end{equation*}
With the help of Lemma \ref{4L.4} and Lebesgue dominated convergence theorem,
\begin{eqnarray*}
&&\lim_{m \rightarrow \infty} P\Bigg(\Bigg|\int_{-1}^1 \mathfrak{\mathfrak{g}}_m(v)dY_{\zeta,\eta}(v,\varpi)\Bigg|> \varepsilon-\psi\Bigg)\\
&&\leq \frac{2^{\chi+1}C}{(\chi+1){(\varepsilon-\psi)}^\chi}\lim_{m \rightarrow \infty}\Big\{\int_{-1}^1 |\mathfrak{g}_m(v)|^\chi d\mu_{\zeta,\eta}(v)\Big\}\\
&&= \frac{2^{\chi+1}C}{(\chi+1)\varepsilon'^\chi}\Big\{\int_{-1}^1 |\mathfrak{g}(v)|^\chi d\mu_{\zeta,\eta}(v)\Big\}.
\end{eqnarray*}
\end{proof}
\subsection{Convergence of random Fourier--Jacobi series:}
Consider the random Fourier--Jacobi series
\begin{equation}\label{1.11.1}
\sum_{m=0}^\infty c_m C_m q_m^{(\zeta,\eta)}(u)
\end{equation}
in orthonormal Jacobi polynomials $q_m^{(\zeta,\eta)}(u)$ with random coefficients $C_m(\varpi).$
The $C_m(\varpi)$ are Fourier--Jacobi coefficients of symmetric stable process
$Y_{\zeta,\eta}(v,\varpi)$ defined as in (\ref{2.2}).
The following theorem establishes the convergence of random series \eqref{1.11.1} in probability.
\begin{theorem}\label{4T.2}
 If $\mathfrak{g} \in \mathrm{L}_{[-1,1]}^\mathrm{p}(d\mu_{\zeta,\eta}),$
  and
$\mathrm{p}$ satisfies the condition
\begin{equation}\label{4.3.4}
4\;\max\Big\{\frac{\zeta+1}{2\zeta+3},\frac{\eta+1}{2\eta+3}\Big\}< p<
4\;\min\Big\{\frac{\zeta+1}{2\zeta+1},\frac{\eta+1}{2\eta+1}\Big\},
\end{equation}
then the random Fourier--Jacobi series (\ref{1.11.1})
converges in the sense of probability to
\begin{equation}\label{4.3.3}
\int_{-1}^1 \mathfrak{g}(u,v)dY_{\zeta,\eta}(v,\varpi),
\end{equation}
 for $\mathrm{p} \geq \chi >1$ and $\zeta,\eta \geq 0.$
\end{theorem}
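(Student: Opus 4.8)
The plan is to recognise each partial sum of (\ref{1.11.1}) as one stochastic integral against $dY_{\zeta,\eta}$ and then to convert the classical $\mathrm L^{\mathrm p}$-convergence of Fourier--Jacobi expansions, which the hypothesis (\ref{4.3.4}) is designed to supply, into convergence in probability by means of Lemma \ref{4L.7}. Fix an interior point $u\in(a,b)$ and set
\begin{equation*}
S_n(u,\varpi):=\sum_{m=0}^{n}c_m C_m(\varpi)\,q_m^{(\zeta,\eta)}(u)
=\sum_{m=0}^{n}c_m q_m^{(\zeta,\eta)}(u)\int_a^b q_m^{(\zeta,\eta)}(v)\,dY_{\zeta,\eta}(v,\varpi).
\end{equation*}
Because the sum is finite and the integral of Theorem \ref{4T.1} is linear, $S_n(u,\varpi)=\int_a^b K_n(u,v)\,dY_{\zeta,\eta}(v,\varpi)$ with the kernel $K_n(u,v):=\sum_{m=0}^{n}c_m q_m^{(\zeta,\eta)}(u) q_m^{(\zeta,\eta)}(v)$, whose formal $n\to\infty$ limit is the function $\mathfrak g(u,v):=\sum_{m=0}^{\infty}c_m q_m^{(\zeta,\eta)}(u) q_m^{(\zeta,\eta)}(v)$ appearing in the target integral (\ref{4.3.3}). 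By orthonormality the $k$-th Fourier--Jacobi coefficient (in $v$) of $K_n(u,\cdot)$ is $c_k q_k^{(\zeta,\eta)}(u)$ for $k\le n$, so $K_n(u,\cdot)$ is exactly the $n$-th Fourier--Jacobi partial sum of $\mathfrak g(u,\cdot)$.

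The decisive step is to prove that, for fixed interior $u$, $K_n(u,\cdot)\to\mathfrak g(u,\cdot)$ in $\mathrm L^{\chi}(d\mu_{\zeta,\eta})$. I would first note that the Szeg\H{o} asymptotics make $|q_m^{(\zeta,\eta)}(u)|$ bounded in $m$ at an interior $u$, so the coefficients $c_m q_m^{(\zeta,\eta)}(u)$ inherit the decay of the Fourier--Jacobi coefficients $c_m$ of $\mathfrak g\in\mathrm L^{\mathrm p}(d\mu_{\zeta,\eta})$; this is what lets one identify $\mathfrak g(u,\cdot)$ as a genuine element of $\mathrm L^{\mathrm p}(d\mu_{\zeta,\eta})$. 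The convergence $K_n(u,\cdot)\to\mathfrak g(u,\cdot)$ then follows from the mean-convergence theorem for Fourier--Jacobi series (Pollard--Muckenhoupt): the range (\ref{4.3.4}) is precisely the set of $\mathrm p$ for which the partial-sum operators $S_n$ are uniformly bounded on $\mathrm L^{\mathrm p}(d\mu_{\zeta,\eta})$, hence convergent. Since $\mathfrak g(u,\cdot)\in\mathrm L^{\mathrm p}(d\mu_{\zeta,\eta})$, Theorem \ref{4T.1} simultaneously guarantees that the limit integral (\ref{4.3.3}) exists in probability.

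With kernel convergence established, the conclusion is quantitative. Writing the error as a single integral,
\begin{equation*}
S_n(u,\varpi)-\int_a^b \mathfrak g(u,v)\,dY_{\zeta,\eta}(v,\varpi)=\int_a^b\big(K_n(u,v)-\mathfrak g(u,v)\big)\,dY_{\zeta,\eta}(v,\varpi),
\end{equation*}
I would apply Lemma \ref{4L.7} to $K_n(u,\cdot)-\mathfrak g(u,\cdot)$ to obtain
\begin{equation*}
P\left(\left|S_n(u,\varpi)-\int_a^b \mathfrak g(u,v)\,dY_{\zeta,\eta}(v,\varpi)\right|>\varepsilon\right)\le\frac{2^{\chi+1}C}{(\chi+1)(\varepsilon')^{\chi}}\int_a^b\big|K_n(u,v)-\mathfrak g(u,v)\big|^{\chi}\,d\mu_{\zeta,\eta}(v).
\end{equation*}
As $\mathrm p\ge\chi$ and $\mu_{\zeta,\eta}$ is finite on $[a,b]$, H\"older's inequality turns the $\mathrm L^{\mathrm p}$-convergence of the kernel into $\mathrm L^{\chi}$-convergence, so the right-hand side tends to $0$; letting $n\to\infty$ yields the asserted convergence in probability.

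The only genuinely hard part is the middle paragraph. Reducing to a single stochastic integral and invoking Lemma \ref{4L.7} are routine, but the real content is the $\mathrm L^{\chi}(d\mu_{\zeta,\eta})$-convergence of the bivariate kernel $K_n(u,\cdot)$: one must certify that $\mathfrak g(u,\cdot)$ is a bona fide $\mathrm L^{\mathrm p}$ function and that the Fourier--Jacobi partial-sum operators act on it with a bound uniform in $n$ (and, if a statement uniform in $u$ is wanted, uniform in the fixed parameter $u$ as well). This is exactly where the two-sided Pollard--Muckenhoupt restriction (\ref{4.3.4}) on $\mathrm p$ in terms of $\zeta$ and $\eta$ is indispensable and cannot be weakened.
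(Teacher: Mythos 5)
Your proposal follows essentially the same route as the paper's proof: write the partial sum as the single stochastic integral $\int_a^b \mathbf{s}_m^{(\zeta,\eta)}(u,v)\,dY_{\zeta,\eta}(v,\varpi)$ with the kernel $\sum_{j\le m} c_j q_j^{(\zeta,\eta)}(u)q_j^{(\zeta,\eta)}(v)$, bound the deviation probability by Lemma \ref{4L.7}, and drive the right-hand side to zero via Pollard's mean-convergence theorem under condition (\ref{4.3.4}), passing from $\mathrm{L}^{\mathrm{p}}$ to $\mathrm{L}^{\chi}$ by H\"older on the finite measure space. Your write-up is in fact somewhat more careful than the paper's, since you make explicit both the H\"older step and the identification of $\mathfrak{g}(u,\cdot)$ as the function whose Fourier--Jacobi coefficients are $c_m q_m^{(\zeta,\eta)}(u)$ (using boundedness of the orthonormal Jacobi polynomials at interior points), points the paper leaves implicit.
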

\begin{proof}
 The $m$term approximation of the series (\ref{1.11.1}) is
\begin{equation*}
\mathbf{S}_m^{(\zeta,\eta)}(u,\varpi):=\sum_{j=0}^m c_j C_j(\varpi)q_j^{(\zeta,\eta)}(u).
\end{equation*}
The integral form of $\mathbf{S}_m^{(\zeta,\eta)}(u,\varpi)$ is
\begin{equation*}
\begin{split}
\mathbf{S}_m^{(\zeta,\eta)}(u,\varpi)&=\sum_{j=0}^m c_j C_j(\varpi)q_j^{(\zeta,\eta)}(u)\\
&= \int_{-1}^1  \mathbf{s}_m^{(\zeta,\eta)}(u,v) dY_{\zeta,\eta}(v,\varpi),
\end{split}
\end{equation*}
where
$\mathbf{s}_m^{(\zeta,\eta)}(u,v):=\sum\limits_{j=0}^m c_j q_j^{(\zeta,\eta)}(u)q_j^{(\zeta,\eta)}(v).$

With the help of Lemma \ref{4L.7}, for $\varepsilon'<\varepsilon$ and $\zeta,\eta \geq 0,$
\begin{equation*}
\begin{split}
&P\Bigg(\Bigg|\int_{-1}^1 \mathfrak{g}(u,v)dY_{\zeta,\eta}(v,\varpi)-\mathbf{S}_m^{(\zeta,\eta)}(v,\varpi)\Bigg|> \varepsilon \Bigg)\\
&= P\Bigg(\Bigg|\int_{-1}^1 \mathfrak{g}(u,v)dY_{\zeta,\eta}(v,\varpi)-\int_a^b \mathbf{s}_m^{(\zeta,\eta)}(u,v)dY_{\zeta,\eta}(v,\varpi)\Bigg|\Bigg)\\
&\leq \frac{2^{\chi+1}C}{\varepsilon'^\chi(\chi+1)}
\int_{-1}^1 \Big|\Big(\mathfrak{g}(u,v)-\mathbf{s}_m^{(\zeta,\eta)}(u,v)\Big)\Big|^\chi d\mu_{\zeta,\eta}(v).
\end{split}
\end{equation*}
It is known that,
for $\mathfrak{g} \in \mathrm{L}_{[-1,1]}^{\mathrm{p}}(d\mu_{\zeta,\eta})$ and $\zeta,\eta \geq 0,$ if $\mathrm{p}$
satisfy the condition (\ref{4.3.4}),
then by \cite[Theorem~A]{P2}
\begin{equation*}
\lim_{m \rightarrow \infty}\int_{-1}^1 \Big|\Big(\mathfrak{g}(v)-\mathbf{s}_m^{(\zeta,\eta)}(v)\Big)\Big|^\mathrm{p} d\mu_{\zeta,\eta}(v)=0.
\end{equation*}
Thus, for $\mathrm{p}\geq \chi,$
\begin{equation*}
\int_{-1}^1 |\mathfrak{g}(u,v)-\mathbf{s}_m^{(\zeta,\eta)}(u,v)|^\chi d\mu_{\zeta,\eta}(v)=0,
\; \text{as} \; m \rightarrow \infty,
\end{equation*}
which completes the proof.
\end{proof}
\section*{Acknowledgments}
I am thankful to University Grant Commission (National Fellowship with letter no-F./2015-16/NFO-2015-17-OBC-ORI-33062) for supporting this research work.


\begin{thebibliography}{ABCR}

\bibitem{CM} C.T. Chan, A. Morozov,  A. Sleptsov, 
{\it Orthogonal polynomials in mathematical physics}, 
Reviews Math. Phys. 
{30} (6) (2018), 1-64.
\url{https://doi.org/10.1142/S0129055X18400056}

\bibitem{DP}
S. Dash and S. Pattanayak, Convergence in mean of some random 
  Fourier series, {\it J. Math. Anal. Appl.}, {\bf 339} (2008), 98--107. \url{https://doi.org/10.1016/j.jmaa.2007.06.036}

\bibitem{DPP}
S. Dash, T. Patel, and S. Pattanayak, Convergence and summability in
  the mean of random Fourier--Stieltjes series, {\it Kodai Math. J.}, {\bf 32} (2009),
  231--237.\url{https://doi.org/10.2996/kmj/1245982905}

\bibitem{GA} G.A. Grauer, Random noise generation using Fourier series, {\it J. Aircraft}, {\bf 55} (4) (2018), 1754--1760. \url{https://doi.org/10.2514/1.C034616}

\bibitem{HKPS} T. Hide, H. Kuo, J. Potthoff, and L. Streit, {\it White Noise}, Kluwer Academic Publishers, London, 1993.

\bibitem{GH}  G.A. Hunt, Random Fourier Transforms, {\it Trans. Amer. Math. Soc.}, {\bf 71} (1951), 38--69.

\bibitem{KAO2019}   K. Kashima, H. Aoyama, and Y. Ohta, Stable process approach to a analysis of systems under heavy--tailed noise: modeling and stochastic linearization,
{\it IEEE Trans Automat. Control}, {\bf 64} (4) (2019), 1344--1357.\url{https://doi.org/10.1109/TAC.2018.2842145}

\bibitem{K1}  T. Kawata, 
{\it Fourier analysis in probability theory}, 
Academic Press, New York, 1972.

\bibitem{LI}  Z. Li, 
{\it Pointwise Convergence of Fourier--Jacobi series}, 
Approx. Theory Appl. 
{11} (1995), 58-77.\url{https://doi.org/}

\bibitem{LL} Z. Liu, and S. Liu, 
{\it Randomization of the Fourier transform},
 Opt. Lett. 
{32} (2007), 478-480. 
 \url{https://doi.org/10.1364/ol.32.000478}

\bibitem{LL1} Z. Liu, and S. Liu, 
{\it Random fractional Fourier transform},
 Opt. Lett. 
 {32} (2007), 2088-2090. 
 \url{https://doi.org/10.1364/ol.32.002088}

\bibitem{NPM} C. Nayak, S. Pattanayak, and M.N. Mishra, Random Fourier--Stieltjes series associated with stable process,
{\it T{\^o}hoku Math. J.}, {\bf 39} (1987), 1--15.\url{https://doi.org/ 10.2748/tmj/1178228364}

\bibitem{L} E. Lukacs, 
{\it Stochastic convergence}, 
Second Ed., Academic Press, London, 1975.

\bibitem{MS} P. Maharana, and S. Sahoo, 
 On the convergence of random Fourier--Jacobi series in continuous functions,
{\it Comm. Math.} 
{\bf 32} (1) (2024), 49-61. 
 \url{https://doi.org/10.46298/cm.10412}
 
 \bibitem{J} M. Mar\u{c}okov\u{a}, 
V. Guldan, 
{\it Jacobi polynomials and some related functions},
 Math. Methods Eng. 
 (2014), 219-227.
\url{https://doi.org/10.1007/978-94-007-7183-3_20}

\bibitem{MM} P. Marian, T. Marian, 
{\it On a power series involving classical orthogonal polynomials}, Rom. J. Phys. 
{55} (2010), 631-644.

\bibitem{NPM}  C. Nayak, S. Pattanayak, and M.N. Mishra, 
 Random Fourier-Stieltjes series associated with stable process,
{\it T{\^o}hoku Math. J. }
 {\bf 39} (1987) 1-15. 
 \url{https://doi.org/10.2748/tmj/1178228364}

\bibitem{P2}  H. Pollard, 
The mean convergence of orthogonal series III, 
{\it Duke Math. J.} 
{\bf 16} (1949), 189-191. 
\url{https://doi.org/10.1215/S0012-7094-49-01619-1}

\bibitem{PZ1.1}  R.E.A.C. Paley, and A. Zygmund, On some series of functions I, {\it Proc. Camb. Philos. Soc.}, {\bf 26} (1930), 337--357.\url{https://doi.org/10.1017/S0305004100016078}

\bibitem{PZ1.2} R.E.A.C. Paley, and A. Zygmund, On some series of functions III, {\it Proc. Camb. Philos. Soc.}, {\bf 28} (1932), 190--205.\url{https://doi.org/10.1017/S0305004100010860}

\bibitem{PWZ}  R.E.A.C. Paley, N. Wiener, and A. Zygmund, Notes on random functions, {\it Math. Z.}, {\bf 37} (1932), 647--668.\url{https://doi.org/10.1007/BF01474606}

\bibitem{HR}  H. Rademacher, Einige S{\"a}tze {\"u}ber reihen von allgemeinen orthogonal functionen, {\it Math. Ann.}, {\bf 87} (1922), 112--138.

\bibitem{SM}  G. Samal, and M.N. Mishra, 
 Continuity property of a random Fourier--Stieltjes series, 
{\it Bull. Calcutta Math. Soc.} 
{\bf 63} (1971) 97-100.

\bibitem{ST}  G. Samorodnitsky, and M.S. Taqqu, 
{\it Stable non-gaussian processes: Stochastic Models with Infinite Variance}, 
Chapman and Hall/CRC, New York, 2000.

\bibitem{HS1} H. Steinhaus, Les probabilit{\`e}s d{\`e}nombrables et leur rapport {\`a} la th{\`e}orie ie la measure, {\it Fund. Math.}, 
{\bf 4} (1923), 286--310.

\bibitem{HS2} H. Steinhaus, {\"U}ber die wahrscheinlichkeit daf{\"u}r, dass der konvergenzkries einer potenzreihe, ihre nat{\"u}rlich Grenzeist,
{\it Math. Z.}, {\bf 31} (1929), 408--416. 

\end{thebibliography}
\end{document}